\documentclass[
  11pt,
  reqno
]{amsart}

\usepackage{mathtools}
\usepackage{amssymb}
\usepackage{amsthm}
\usepackage{mathrsfs}

\usepackage[no-math]{fontspec}

\usepackage{microtype}
\usepackage{tikz-cd}

\usepackage[
  backend=biber,
  style=numeric,
  doi=true,
  sorting=nyt,
  eprint=true,
  giveninits=true,
  maxnames=99
]{biblatex}
\DeclareFieldFormat[article]{title}{\mkbibemph{#1}}
\renewbibmacro*{in:}{%
  \ifentrytype{article}
    {}
    {\printtext{\bibstring{in}\intitlepunct}}}

\usepackage{hyperref}
\usepackage[nameinlink,noabbrev]{cleveref}
\newcommand{\Z}{\mathbb{Z}}

\newcommand{\ult}[2]{\mathrm{UMet}(#1; #2)}
\newcommand{\bult}[2]{\mathrm{UMet}_{\mathrm{bd}}(#1; #2)}
\newcommand{\bpult}[2]{\mathrm{PUMet}_{\mathrm{bd}}(#1; #2)}
\DeclareMathOperator{\umetdis}{\mathcal{UD}}
\newcommand{\Zp}{\Omega}
\newcommand{\Bp}{\Gamma}
\newcommand{\Up}{\Lambda}

\newcommand{\aclass}{\mathfrak{A}}
\newcommand{\mclass}{\mathfrak{M}}
\newcommand{\yoclosure}{\operatorname{cl}}
\newcommand{\yointerior}{\operatorname{int}}
\newcommand{\yodiameter}{\operatorname{diam}}
\newcommand{\restr}[2]{#1\mathord{|}_{#2}}
\theoremstyle{plain}
\numberwithin{equation}{section}
\newtheorem{theorem}{Theorem}[section]
\newtheorem{lemma}[theorem]{Lemma}
\newtheorem{proposition}[theorem]{Proposition}

\theoremstyle{definition}

\theoremstyle{remark}

\crefname{theorem}{theorem}{theorems}
\crefname{lemma}{lemma}{lemmas}
\crefname{proposition}{proposition}{propositions}
\crefname{corollary}{corollary}{corollaries}
\crefname{definition}{definition}{definitions}
\crefname{example}{example}{examples}
\crefname{question}{question}{questions}
\crefname{remark}{remark}{remarks}

\newenvironment{acknowledgements}
  {\subsection*{\mdseries\itshape Acknowledgements}}
  {}

\newenvironment{useofai}
  {\subsection*{Use of AI}}
  {}

\title{Absolute Borel Complexity of Moduli Spaces of Ultrametrics}
\author{Yoshito Ishiki}
\address{Department of Mathematical Sciences\\
Tokyo Metropolitan University\\
Minami-osawa, Hachioji, Tokyo 192-0397, Japan}
\email{ishiki-yoshito@tmu.ac.jp}
\date{\today}

\subjclass[2020]{Primary 54E35; Secondary 54H05, 54C35}
\keywords{ultrametric, space of metrics, absolute Borel class, completely metrizable space}

\begin{document}

\begin{abstract}
Let $X$ be an ultrametrizable space.  We study the space of bounded compatible ultrametrics on $X$, equipped with its natural non-Archimedean distance.  For every positive integer level, we prove that additive absolute Borel complexity of this moduli space implies multiplicative absolute Borel complexity of $X$ at the same level, and conversely.  We also prove that an ultrametrizable space is a countable union of locally compact subspaces if and only if it is a countable union of closed subsets in every completion induced by a bounded compatible ultrametric.  As a consequence, this moduli space is completely metrizable exactly when $X$ is a countable union of compact subsets.
\end{abstract}

\maketitle

\section{Introduction}

For a metrizable space $X$, the collection of metrics generating the topology
of $X$ is itself a moduli space.  Its topology records how two geometries on
the same underlying space differ uniformly.  Koshino studied the absolute
Borel complexity of bounded admissible metrics with the supremum metric and
proved two reversal principles.  Additive complexity of the metric space
forces multiplicative complexity of $X$, whereas multiplicative complexity of
the metric space forces additive complexity of $X$
(see \cite{koshino-borel}).

The corresponding non-Archimedean moduli space carries a more rigid distance
than the ordinary supremum metric.  Let $R\subseteq[0,\infty)$ contain $0$.
Following the notation for spaces of ultrametrics, we write $\ult{X}{R}$ for
the compatible $R$-valued ultrametrics on $X$.  Their natural distance is
\[
  \umetdis_X^R(d,e)
  =\inf\bigl\{\epsilon\in R\setminus\{0\}\mid
     d\leq e\vee\epsilon\ \text{and}\ e\leq d\vee\epsilon\bigr\},
\]
where the inequalities are pointwise and $a\vee b=\max\{a,b\}$.
This distance detects the largest level at which the rooted ball structures
of $d$ and $e$ disagree.  In particular, it is generally strictly finer than
the topology induced by the ordinary supremum distance(see
\cite{ishiki-factorization,ishiki-simultaneous}).  The object studied below is
the subspace $\bult{X}{R}$ consisting of bounded ultrametrics.

We recall the absolute Borel classes.  For a metrizable space $Y$,
let $\aclass_0(Y)$ and $\mclass_0(Y)$ be the open and closed subsets of $Y$,
respectively.  We call $\aclass_n(Y)$ the additive class and
$\mclass_n(Y)$ the multiplicative class at level $n$.  Inductively, for
$n\geq1$, let $\aclass_n(Y)$ consist of the countable unions of sets in
$\bigcup_{m<n}\mclass_m(Y)$, and let $\mclass_n(Y)$ consist of the countable
intersections of sets in
$\bigcup_{m<n}\aclass_m(Y)$.  A metrizable space $T$ belongs to the absolute
class $\aclass_n$ (respectively, $\mclass_n$) if it belongs to
$\aclass_n(Y)$ (respectively, $\mclass_n(Y)$) in every metrizable ambient
space $Y$.

In the present paper, we prove the full analogue of Koshino's absolute Borel
reversal.  More precisely, if $R$ is characteristic, $X$ is
ultrametrizable, and $n\geq1$, then
\[
 \bult{X}{R}\in\aclass_n \Longrightarrow X\in\mclass_n,
 \qquad
 \bult{X}{R}\in\mclass_n \Longrightarrow X\in\aclass_n,
\]
where $\bult{X}{R}$ is equipped with $\umetdis_X^R$.  These implications are
proved in \Cref{thm:additive-reversal,thm:multiplicative-reversal}.

The proof follows the completion-boundary architecture of
\cite{koshino-borel}.  The additive cross-term for ordinary metrics is
replaced by a maximum formula, which gives a closed isometric boundary
embedding for $\umetdis_X^R$.  The other ingredient is the
$\umetdis$-isometric extension operator for ultrametrics on a closed subspace
(see \cite{ishiki-simultaneous,ishiki-factorization}).

At the first additive level, namely $\aclass_1$, a separate issue appears.  If
$X$ is not $\sigma$-locally compact, Stone's theorem produces a completely
metrizable ambient space in which $X$ is not $F_\sigma$, but that ambient space
need not be an ultrametric completion of $X$.  We resolve this by a direct
cover-coordinate construction related to Morita's perfect zero-dimensional
resolution \cite[Theorem~2.1]{morita-products} and to the perfect
$L$-invertible maps of Karasev and Valov
\cite[Proposition~2.7]{karasev-valov-quasifinite}.  The resulting resolving
space carries a bounded complete $R$-valued ultrametric and transfers the
non-$F_\sigma$ witness to an ultrametric completion.

In \Cref{thm:completion-selection}, we prove that, for a characteristic
range set $R$, an ultrametrizable space $X$ is $\sigma$-locally compact if
and only if it is an $F_\sigma$-subset of the completion associated with
every $u\in\bult{X}{R}$.

Using a $G_\delta$-representation of $\bult{X}{R}$ in the complete ambient
space $\bpult{X}{R}$, together with
\Cref{thm:complete-metrizability}, we obtain the following characterization:
for every ultrametrizable $X$, the space
$(\bult{X}{R},\umetdis_X^R)$ is completely metrizable exactly when $X$ is
$\sigma$-compact.

The organization of the paper is as follows.  Section~2 recalls the basic
definitions and standard facts about absolute Borel classes.  Section~3
proves the boundary embedding and establishes the extension result used
later.  Section~4 proves the lifting  theorem and its consequence, and
Section~5 proves the two absolute Borel reversals.  Section~6 proves the
complete metrizability characterization stated in
\Cref{thm:complete-metrizability}.

\begin{useofai}
OpenAI Codex was used in the preparation of this manuscript for language
editing,
\LaTeX{}
typesetting assistance, and exploration of proof
constructions.
  The author takes full
responsibility for the mathematical content and the final version of the
manuscript.
\end{useofai}

\begin{acknowledgements}
  The work  was supported by JSPS KAKENHI Grant Number JP24KJ0182.
\end{acknowledgements}

\section{Preliminaries}

A \emph{range set} is a subset
$R\subseteq[0,\infty)$ containing $0$.  It is \emph{characteristic} if
\[
  \inf(R\setminus\{0\})=0.
\]
A pseudo-ultrametric on a set $X$ is a map
$d\colon X\times X\to[0,\infty)$ which is symmetric, vanishes on the diagonal, and
satisfies
\[
  d(x,y)\leq d(x,z)\vee d(z,y).
\]
It is an ultrametric if $d(x,y)=0$ implies $x=y$.  For a topological space
$X$, let $\ult{X}{R}$ be the set of all $R$-valued ultrametrics generating
the topology of $X$, and let $\bult{X}{R}$ denote its bounded members.
We use the standard fact that every ultrametrizable space admits a bounded
compatible $R$-valued ultrametric when $R$ is characteristic (see
\cite{ishiki-embedding}).

For $R$-valued pseudo-ultrametrics $d$ and $e$, define
\begin{equation}\label{eq:UD-definition}
 \umetdis_X^R(d,e)=\inf E_R(d,e),
\end{equation}
where
\[
 E_R(d,e)=\left\{\epsilon\in R\setminus\{0\}
 \, \middle |\,
 \begin{array}{l}
 d(x,y)\leq e(x,y)\vee\epsilon,\\
 e(x,y)\leq d(x,y)\vee\epsilon
 \end{array}
 \text{ for all }x,y\in X\right\}.
\]
We put $\inf\emptyset=\infty$.  The function $\umetdis_X^R$ is an extended
ultrametric.  Its restriction to $\bult{X}{R}$ is denoted by the same symbol.
The extended distance induces a metrizable topology in the usual way by means
of its $\epsilon$-balls.

We use the following observation.
\begin{lemma}\label{lem:threshold-agreement}
Let $d$ and $e$ be $R$-valued pseudo-ultrametrics on $X$.  If
$\epsilon>0$ and $\umetdis_X^R(d,e)<\epsilon$, then
\[
 d(x,y)=e(x,y)
 \quad\text{whenever}\quad
 d(x,y)\geq\epsilon\ \text{or}\ e(x,y)\geq\epsilon.
\]
\end{lemma}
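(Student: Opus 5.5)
Since $\umetdis_X^R(d,e)=\inf E_R(d,e)<\epsilon$, the first step is to choose a witness $\delta\in E_R(d,e)$ with $\delta<\epsilon$. Such a $\delta$ exists by the definition of the infimum: if every element of $E_R(d,e)$ were $\geq\epsilon$, then the infimum would be $\geq\epsilon$. Here we use the convention $\inf\emptyset=\infty$, so $E_R(d,e)$ is in particular nonempty. The choice of $\delta$ gives, for all $x,y\in X$,
\[
 d(x,y)\leq e(x,y)\vee\delta
 \quad\text{and}\quad
 e(x,y)\leq d(x,y)\vee\delta .
\]

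Next I would fix $x,y$ and, by symmetry, treat the case $d(x,y)\geq\epsilon$; the case $e(x,y)\geq\epsilon$ is identical with the roles of $d$ and $e$ exchanged.

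\emph{Upper bound.} Since $d(x,y)\geq\epsilon>\delta$, we have $d(x,y)\vee\delta=d(x,y)$. The second inequality therefore gives $e(x,y)\leq d(x,y)$.

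\emph{Lower bound.} From the first inequality, $d(x,y)\leq e(x,y)\vee\delta$. The maximum cannot equal $\delta$, because $d(x,y)\geq\epsilon>\delta$. Hence it equals $e(x,y)$, and so $d(x,y)\leq e(x,y)$.

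Combining the two bounds gives $d(x,y)=e(x,y)$.

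There is no real obstacle in this argument. The only point needing care is that the witness must satisfy the strict inequality $\delta<\epsilon$; this is exactly why the hypothesis $\umetdis_X^R(d,e)<\epsilon$ is strict. Note also that the argument does not use that $R$ is characteristic or any ultrametric inequality, only the defining inequalities of $E_R(d,e)$.
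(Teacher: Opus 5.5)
Your proposal is correct and follows essentially the same route as the paper: pick a witness in $E_R(d,e)$ strictly below $\epsilon$, then use $d(x,y)\geq\epsilon$ to collapse each defining inequality to one direction of the equality, with the other case handled by symmetry. Your explicit justification that the maximum $e(x,y)\vee\delta$ cannot be $\delta$ just spells out the step the paper calls ``reduces to $d(x,y)\leq e(x,y)$.''
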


\begin{proof}
Since $\umetdis_X^R(d,e)<\epsilon$, the definition of the infimum gives an
$\eta\in E_R(d,e)$ with $\eta<\epsilon$.  Fix $x,y\in X$.  Suppose first that
$d(x,y)\geq\epsilon$.  The defining inequalities for $E_R(d,e)$ give
\[
 d(x,y)\leq e(x,y)\vee\eta
 \quad\text{and}\quad
 e(x,y)\leq d(x,y)\vee\eta.
\]
Because $\eta<\epsilon\leq d(x,y)$, the first inequality reduces to
$d(x,y)\leq e(x,y)$, while the second gives
$e(x,y)\leq d(x,y)$.  Hence $d(x,y)=e(x,y)$.

If instead $e(x,y)\geq\epsilon$, the same two defining inequalities, with
$d$ and $e$ interchanged, give the same conclusion.
\end{proof}

We use the following standard facts, in the form recorded by Koshino
\cite[Lemma~1.6]{koshino-borel}.

\begin{lemma}\label{lem:complete-ambient}
\begin{enumerate}
  \item A locally finite union of spaces in a fixed absolute Borel class
        belongs to that class.
  \item For $n\geq2$, membership in $\aclass_n$ can be tested in some
        completely metrizable ambient space; for every $n\geq1$, the same is
        true for $\mclass_n$.
  \item If $T\notin\aclass_1$, there is a completely metrizable ambient
        space $Y$ with $T\notin\aclass_1(Y)$.
  \item Absolute Borel classes are inherited by closed subspaces.
\end{enumerate}
\end{lemma}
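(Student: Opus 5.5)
The four items are classical and I would verify them separately, reducing each to two tools. The first is the Lavrentiev extension theorem: a homeomorphism between subspaces of completely metrizable spaces extends to a homeomorphism between $G_\delta$ supersets. The second is the elementary fact that for a subspace $S\subseteq Y$ and $n\geq0$, the relative classes are traces, $\aclass_n(S)=\{A\cap S\mid A\in\aclass_n(Y)\}$ and likewise for $\mclass_n$. This follows by induction on $n$.

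\emph{Item (3).} This is immediate from Stone's theorem as quoted in the introduction, or directly. Pick a metrizable $Y\supseteq T$ with $T\notin\aclass_1(Y)$, and let $Z$ be a metric completion of $Y$. If $T$ were $F_\sigma$ in $Z$, then $T=T\cap Y$ would be $F_\sigma$ in $Y$ by the trace property. So $Z$ is the required completely metrizable ambient space.

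\emph{Item (2).} Suppose $T\in\mclass_n(Z)$ for some completely metrizable $Z\supseteq T$, and let $Y\supseteq T$ be arbitrary. Replacing $Y$ by its completion $\widehat Y$, Lavrentiev's theorem applied to the identity of $T$ gives $G_\delta$ sets $G\subseteq\widehat Y$ and $H\subseteq Z$ containing $T$, together with a homeomorphism $G\cong H$ fixing $T$.
\begin{itemize}
\item By the trace property, $T\in\mclass_n(H)$, hence $T\in\mclass_n(G)$, so $T=B\cap G$ with $B\in\mclass_n(\widehat Y)$.
\item Since $G\in\mclass_1(\widehat Y)$ and $n\geq1$, the intersection $B\cap G$ lies in $\mclass_n(\widehat Y)$. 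Tracing to $Y$ gives $T\in\mclass_n(Y)$.
\item For $\aclass_n$ the same argument yields $T=A\cap G$ with $A\in\aclass_n(\widehat Y)$. Here $n\geq2$ is needed so that $G\in\mclass_1\subseteq\aclass_2\subseteq\aclass_n$ and $\aclass_n$ is closed under finite intersections.
\end{itemize}
This is exactly why the $\aclass_1$ case is excluded.

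\emph{Item (4).} Let $C$ be closed in $T$ and $Y\supseteq C$ metrizable. I would form the adjunction space $W=Y\cup_C T$, after first replacing $Y$ by $\yoclosure_Y C$; this is harmless because classes pass to and from closed subspaces by tracing and intersecting with a closed set. I would then metrize $W$ by Hausdorff's metric extension theorem, so that both $Y$ and $T$ embed in $W$ with $Y\cap T=C$. Then $T\in$ the class of $W$. Since $C=T\cap F$ for a closed $F\subseteq W$ (the closure of $C$), $C$ is in the same class of $W$, and tracing to $Y$ finishes. The step I expect to be the main technical point is checking that this glued space is metrizable with $T$ embedded as a subspace. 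If the gluing becomes awkward, I would instead first pass to completions and use (2), treating $\aclass_1$ separately via Stone's characterization.

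\emph{Item (1).} Let $T=\bigcup_\lambda T_\lambda$ with $\{T_\lambda\}$ a locally finite closed cover whose members lie in a fixed class, and let $Y\supseteq T$. For each $t\in T$, choose an open $U_t\subseteq Y$ meeting only finitely many $T_\lambda$, and set $W=\bigcup_t U_t$. Then $W$ is open in $Y$ and the family $\{\yoclosure_W T_\lambda\}$ is locally finite in $W$. Each $T_\lambda$ is in the class in $W$. By paracompactness (Stone), I would take a $\sigma$-discrete open refinement, and Montgomery's lemma then shows that a locally finite union of sets of a given Borel class in a metrizable space is of that class. Hence $T$ is in the class in $W$, and since $W$ is open in $Y$ this transfers to $Y$ for all levels considered.
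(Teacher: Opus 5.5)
The paper does not prove this lemma. It records the four items as standard facts and defers to Koshino's Lemma~1.6 and Sakai's Theorem~5.11.2, with Stone's theorem for $\aclass_1$. Your route is correct and is the classical argument behind those references: trace identities plus Lavrentiev for (2), completing a witnessing ambient space for (3), Montgomery's locality lemma for (1), and gluing for (4). What it buys over the bare citation is self-containedness. It also shows exactly why $\aclass_1$ is excluded in (2): the Lavrentiev $G_\delta$ set lies in $\mclass_1\subseteq\aclass_2$, so it is absorbed by $\aclass_n$ only when $n\geq2$. And it gives (3) without Stone's theorem. The citation costs nothing but hides these points.

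Two things need tightening when you write it out.

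In (4), do not give the adjunction space its quotient topology. When $C$ is not closed in $Y$, the quotient is not even first countable. This is the typical case after you replace $Y$ by $\yoclosure_Y C$, since $C$ becomes dense. For a concrete failure, take $T=\mathbb{Q}\times[0,1]$, $C=\mathbb{Q}\times\{0\}$, $Y=\mathbb{R}$.
\begin{itemize}
\item Given neighbourhoods $U_k$ of an irrational $y$, pick $(q_k,s_k)\in U_k$ with $q_k\in\mathbb{Q}$, $q_k\to y$ and $0<s_k\to0$.
\item These points form a set closed in $T$ and disjoint from $C$, hence closed in the quotient.
\item Its complement is then a neighbourhood of $y$ containing no $U_k$.
\end{itemize}
What works is the amalgamated metric on the set $Y\sqcup(T\setminus C)$.
\begin{itemize}
\item Fix a compatible metric $\rho$ on $Y$, and extend $\rho|_{C\times C}$ to a compatible metric $\sigma$ on $T$ by Hausdorff's theorem.
\item Put $d(y,t)=\inf_{c\in C}(\rho(y,c)+\sigma(c,t))$.
\item The triangle inequality is routine.
\item $d(y,t)>0$ for $t\in T\setminus C$ precisely because $C$ is closed in $T$.
\item $Y$ and $T$ then embed isometrically with $Y\cap T=C$.
\end{itemize}
This topology is coarser than the quotient one, which is irrelevant for your purpose. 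The preliminary replacement of $Y$ by $\yoclosure_Y C$ becomes unnecessary. Your fallback via completions, (2) and Stone also works.

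In (1), say explicitly that the levels are $n\geq1$. Montgomery's lemma needs this, and so does the passage from the open set $W$ to $Y$, which uses that $W$ is $F_\sigma$. For $\mclass_0$ the statement is actually false: the absolutely closed metrizable spaces are the compact ones, and an infinite discrete space is a locally finite union of points. This does not affect the paper, which only uses $n\geq1$.
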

The characterization of absolute Borel classes by completely metrizable
ambient spaces is also given in \cite[Theorem~5.11.2]{sakai}.
Stone's theorem identifies $\aclass_1$ with
the $\sigma$-locally compact metrizable spaces \cite{stone-absolute}.

\section{An embedding of remainders}\label{sec:boundary}

The argument below is the non-Archimedean replacement for the additive
boundary construction used for ordinary metrics.  The use of the maximum in
the definition of $i(z)$ is essential for both the strong triangle inequality
and the exact computation of $\umetdis$.

\begin{lemma}\label{lem:boundary-embedding}
Let $R$ be a characteristic range set and let
 $(Y,\rho)$
 be a complete
bounded $R$-valued ultrametric space.
Assume  that $X\subseteq Y$, that
$A,B\subseteq X$ are disjoint and closed in $A\cup B$, and that
 $a\in\yoclosure_Y(A)\setminus X$,
 $Z\subseteq\yoclosure_Y(B)\setminus X$,
 and
$\yoclosure_Y(Z)\subseteq Z\cup B$.
For $z\in Z$, define $i(z)$ on $A\cup B$ by
\begin{equation}\label{eq:boundary-ultrametric}
i(z)(x,y)=
\begin{cases}
 \rho(x,y) & (x,y)\in A^2\cup B^2,\\
 \rho(x,a)\vee\rho(y,z) &x\in A,\ y\in B,\\
 \rho(x,z)\vee\rho(y,a) &x\in B,\ y\in A.
\end{cases}
\end{equation}
Then $i(z)\in\bult{A\cup B}{R}$ and
for every $z, w\in Z$,
we have
\[
 \umetdis_{A\cup B}^R(i(z),i(w))=\rho(z,w).
\]
Moreover, $i(Z)$ is closed in
$(\bult{A\cup B}{R},\umetdis_{A\cup B}^R)$.
\end{lemma}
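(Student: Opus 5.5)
The proof has three parts: (i) each $i(z)$ is a bounded compatible $R$-valued ultrametric on $A\cup B$; (ii) the isometry formula, proved by one inequality from the strong triangle inequality and the reverse by a witness pair; (iii) closedness of $i(Z)$, using completeness of $(Y,\rho)$ and the hypothesis $\yoclosure_Y(Z)\subseteq Z\cup B$.

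\emph{Step (i).} Fix $z\in Z$. The values of $i(z)$ lie in $R$ and are bounded by $\sup\rho$. Symmetry is built into the definition. For positivity on $A\times B$, note that $a\notin X$ and $z\notin X$, so $\rho(x,a)>0$ and $\rho(y,z)>0$. I verify the strong triangle inequality $i(z)(x,y)\le i(z)(x,v)\vee i(z)(v,y)$ by cases on which of $A,B$ contain $x,y,v$. For example, if $x,v\in A$ and $y\in B$, then $\rho(x,a)\le\rho(x,v)\vee\rho(v,a)$. If $x,y\in A$ and $v\in B$, then $\rho(x,y)\le\rho(x,a)\vee\rho(a,y)$. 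The remaining cases are symmetric.

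For compatibility I use that $A$ and $B$ are disjoint and closed in $A\cup B$, hence clopen there. Fix $x\in A$. For every $y\in B$ we have $i(z)(x,y)\ge\rho(x,a)>0$. Likewise, for $y\in B$ and $x\in A$ we have $i(z)(y,x)\ge\rho(y,z)>0$. So $i(z)$-convergent sequences eventually stay in the piece containing their limit, and so do $\rho$-convergent ones, by clopenness. On each piece $i(z)=\rho$, so the two topologies coincide.

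\emph{Step (ii).} The case $z=w$ is trivial, since $R$ is characteristic and so $\inf(R\setminus\{0\})=0$. Now let $z\ne w$ and put $r=\rho(z,w)\in R\setminus\{0\}$.

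On $A^2\cup B^2$ the two ultrametrics $i(z)$ and $i(w)$ agree. For $x\in A$ and $y\in B$, the strong triangle inequality gives $\rho(y,w)\le\rho(y,z)\vee r$. Hence $i(w)(x,y)\le i(z)(x,y)\vee r$, and symmetrically with $z,w$ swapped. So $r\in E_R(i(z),i(w))$, which gives $\umetdis\le r$.

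For the reverse inequality, suppose some $\epsilon\in E_R$ satisfies $\epsilon<r$. Since $a\in\yoclosure_Y(A)$, I pick $x\in A$ with $\rho(x,a)<r$. Since $z\in\yoclosure_Y(B)$, I pick $y\in B$ with $\rho(y,z)<r$. Then $i(z)(x,y)<r$. The isosceles property gives $\rho(y,w)=r$, so $i(w)(x,y)=r>i(z)(x,y)\vee\epsilon$. This contradicts $\epsilon\in E_R$.

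\emph{Step (iii), the main obstacle.} Suppose $i(z_n)\to u$ in $\bult{A\cup B}{R}$. By (ii), $(z_n)$ is $\rho$-Cauchy, so it converges to some $z\in Y$ by completeness. Then $z\in\yoclosure_Y(Z)\subseteq Z\cup B$.

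The key point is to exclude $z\in B$. Assume $z\in B$ and choose $x_k\in A$ with $\rho(x_k,a)\to0$. Given $\epsilon\in R\setminus\{0\}$, take $n$ with $\umetdis(u,i(z_n))<\epsilon$. Then
\[
 u(x_k,z)\le i(z_n)(x_k,z)\vee\epsilon=\rho(x_k,a)\vee\rho(z,z_n)\vee\epsilon .
\]
Since $\rho(z,z_n)\to0$, $\rho(x_k,a)\to0$, and $\epsilon$ can be taken arbitrarily small ($R$ is characteristic), we get $u(x_k,z)\to0$. By compatibility of $u$, this means $x_k\to z$ in $A\cup B$. That is impossible, because $A$ is clopen in $A\cup B$ and $z\notin A$.

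Hence $z\in Z$. By (ii), $i(z_n)\to i(z)$, so $u=i(z)\in i(Z)$, and $i(Z)$ is closed.
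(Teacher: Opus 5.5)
Your proof is correct and follows the paper's argument essentially step for step: the same upper bound via the strong triangle inequality, the same witness pair $x\in A$, $y\in B$ for the lower bound on $\umetdis_{A\cup B}^R$, and the same Cauchy/completeness argument that rules out a limit $z\in B$ using a sequence in $A$ converging to $a$. The differences are cosmetic. The paper gets $i(z)\in\bult{A\cup B}{R}$ by viewing $i(z)$ as the restriction of the $\ell^\infty$-product ultrametric to $A\times\{z\}\sqcup\{a\}\times B\subseteq Y\times Y$, where you do a direct case check. In the closedness step it uses \Cref{lem:threshold-agreement} to get the exact equality $u(x_k,z)=\rho(x_k,a)$ and then concludes $a=z$; your upper bound on $u(x_k,z)$, combined with closedness of $A$ in $A\cup B$, works equally well.
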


\begin{proof}
The metric
$i(z)$ can be considered as
the restriction of
the $\ell^{\infty}$-product metic
$\rho\times_{\infty}\rho$
to the subset
$A\times\{z\} \sqcup \{a\}\times B$
of $Y\times Y$.
Since $A\cup B$ is homeomorphic to
$A\times\{z\} \sqcup \{a\}\times B$,
the metric
$i(z)$ belongs to
$\bult{A\cup B}{R}$.

We next consider the isometry.
Put $r=\rho(z,w)$.  The strong triangle inequality implies that the functions
$y\mapsto\rho(y,z)$ and $y\mapsto\rho(y,w)$ agree whenever either value is
larger than $r$.  Formula~\eqref{eq:boundary-ultrametric} therefore gives
\[
 \umetdis_{A\cup B}^R(i(z),i(w))\leq r.
\]
If $z\neq w$, choose $x\in A$ and $y\in B$ such that
$\rho(x,a)<r$ and $\rho(y,z)<r$.  Then $\rho(y,w)=r$, and hence
\[
 i(z)(x,y)<r=i(w)(x,y).
\]
In this case, using \eqref{eq:UD-definition}, we conclude that
$\umetdis_{A\cup B}^R(i(z),i(w))=r$.

It remains to prove closedness.
Suppose that
$i(z_j)$ converges in
$\umetdis_{A\cup B}^R$
to some
$d\in\bult{A\cup B}{R}$.
The isometry makes
$(z_j)$ a $\rho$-Cauchy sequence.
 It converges in $Y$ to a point
$z\in\yoclosure_Y(Z)\subseteq Z\cup B$.

For the sake of contradiction,
suppose that
$z\in B$.
Choose a sequence
$\{a_k\}_{k\in\Z_{\ge 0}}$ in $A$ converging to
$a$.
For each fixed
$k$, take $j$
 sufficiently large that
\[
 \rho(z_j,z)<\rho(a_k,a)
 \quad\text{and}\quad
 \umetdis_{A\cup B}^R(i(z_j),d)<\rho(a_k,a).
\]
Then
we have
$i(z_j)(a_k,z)=\rho(a_k,a)$ and
\Cref{lem:threshold-agreement} gives
\[
 d(a_k,z)=i(z_j)(a_k,z).
\]
As a result,
we obtain
$d(a_k,z)=\rho(a_k,a)$.
Consequently $d(a_k,z)\to0$.
Since $d$ generates the topology of $A\cup B$,
we see that $\{a_k\}_{k\in\Z_{\ge 0}}$
converges to $z$ in
$A\cup B$,
and hence
$a=z$.
This is impossible by
$a\notin X$
and
 $z\in B\subseteq X$.
Hence
$z\notin B$.
By $z\in Z\cup B$,
we have
$z\in Z$.
The isometry gives $i(z_j)\to i(z)$, and hence
$d=i(z)$.  Thus the image is closed.
\end{proof}

\begin{proposition}\label{prop:closed-extension-image}
Let $R$ be a characteristic range set, let $A$ be a closed subspace of an
ultrametrizable space $X$, and let
\[
 E\colon \bigl(\bult{A}{R},\umetdis_A^R\bigr)
   \longrightarrow
   \bigl(\bult{X}{R},\umetdis_X^R\bigr)
\]
be an isometric extension operator such that
\[
 \restr{E(d)}{A^2}=d
 \quad\text{for every }d\in\bult{A}{R}
\]
(the extension operator is supplied by
\cite[Theorem~1.3]{ishiki-simultaneous}; the bounded version follows from
\cite[Theorem~4.7]{ishiki-factorization}).  If
\[
 i\colon Z\longrightarrow
 \bigl(\bult{A}{R},\umetdis_A^R\bigr)
\]
is a closed isometric embedding, then $E\circ i$ is a closed isometric
embedding into $\bigl(\bult{X}{R},\umetdis_X^R\bigr)$.
\end{proposition}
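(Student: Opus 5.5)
The composition $E\circ i$ is an isometry because both $i$ and $E$ are isometries, so the real content of the statement is closedness of $E(i(Z))$ in $\bigl(\bult{X}{R},\umetdis_X^R\bigr)$. The plan is to reduce this to the closedness of $i(Z)$ in $\bigl(\bult{A}{R},\umetdis_A^R\bigr)$ by using restriction to $A^2$ as a left inverse of $E$.

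First, I will record that the restriction map
\[
 \restr{\cdot}{A^2}\colon \bult{X}{R}\to\bult{A}{R}, \qquad d\mapsto\restr{d}{A^2},
\]
is well defined and $1$-Lipschitz for $\umetdis$. It is well defined because the restriction of a bounded compatible $R$-valued ultrametric on $X$ is a bounded compatible $R$-valued ultrametric on the subspace $A$. It is $1$-Lipschitz because any $\epsilon\in E_R(d,e)$ computed on $X$ also satisfies the defining inequalities on $A$, so $E_R(d,e)\subseteq E_R(\restr{d}{A^2},\restr{e}{A^2})$. Taking infima gives $\umetdis_A^R(\restr{d}{A^2},\restr{e}{A^2})\leq\umetdis_X^R(d,e)$. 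In particular the restriction map is continuous.

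Next, take a sequence $z_j\in Z$ with $E(i(z_j))\to D$ in $\bult{X}{R}$. By continuity of restriction and the identity $\restr{E(d)}{A^2}=d$, we get
\[
 i(z_j)=\restr{E(i(z_j))}{A^2}\longrightarrow \restr{D}{A^2}\quad\text{in } \bult{A}{R}.
\]
Since $i(Z)$ is closed, $\restr{D}{A^2}=i(z)$ for some $z\in Z$. Because $E$ is an isometry, $\umetdis_X^R(E(i(z_j)),E(i(z)))=\umetdis_A^R(i(z_j),i(z))\to0$, so $E(i(z_j))\to E(i(z))$. By uniqueness of limits for the metric $\umetdis_X^R$ (it is a genuine metric on compatible ultrametrics, since distance $0$ forces $d=e$ pointwise), we conclude $D=E(i(z))\in E(i(Z))$. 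Sequences suffice because the topology is metrizable.

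I do not expect a real obstacle. The one point that needs care is that $\umetdis$ may take the value $\infty$. Convergence only involves small distances, however, and the Lipschitz inclusion of the sets $E_R$ works equally when those sets are empty, since then both infima are $\infty$ or the inequality is trivial. So the extended-valued case causes no problem.
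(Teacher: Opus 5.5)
Your proof is correct and follows the paper's argument: restriction to $A^2$ is $1$-Lipschitz and acts as a left inverse of $E$, closedness of $i(Z)$ identifies the restricted limit as some $i(z)$, and the isometry of $E$ forces the limit to be $E(i(z))$. You also spell out two points the paper leaves implicit, namely that restriction lands in $\bult{A}{R}$ and that $\umetdis_X^R$ separates points so limits are unique, and both are handled correctly.
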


\begin{proof}
The restriction map
\[
 \bigl(\bult{X}{R},\umetdis_X^R\bigr)
 \longrightarrow
 \bigl(\bult{A}{R},\umetdis_A^R\bigr),
 \qquad d\longmapsto \restr{d}{A^2},
\]
is $1$-Lipschitz.
If $E(i(z_j))$ converges to
$d$,
its restrictions
$E(i(z_j))|_{A^2}$
therefore converge to
$d|_{A^{2}}$.
Since $i(Z)$ is closed,
we have
$d|_{A^{2}}=i(z)$ for some $z\in Z$.
Since $E$ is isometry,
we conclude that
 $d=E(i(z))$.
\end{proof}

\section{Ultrametric lifting theorem}\label{sec:selection}

We first prove a lifting property of a perfect
surjection from an ultrametrizable space.
The construction goes back to Morita
\cite[Theorem~2.1]{morita-products}.
Its lifting property is the
zero-dimensional case of the theory of
$L$-invertible maps.
The existence of
the underlying perfect
 $0$-invertible resolution follows,
 by specialization
and pullback,
from \cite[Proposition~2.7]{karasev-valov-quasifinite}.

\begin{proposition}\label{prop:perfect-resolution}
Let $R$ be a characteristic range set and let $Y$ be completely metrizable.
There exist a bounded complete $R$-valued ultrametric space $(Z,r)$ and a
perfect surjection
\[
 p\colon Z\longrightarrow Y
\]
with the following property.  For every ultrametrizable space $T$ and every
continuous map $f\colon T\to Y$, there is a continuous map $h\colon T\to Z$ such that
$p\circ h=f$.  If $f$ is an embedding, then $h$ is an embedding.
\[
\begin{tikzcd}[column sep=large, row sep=large]
  & Z \arrow[d, "p"] \\
  T \arrow[r, "f"'] \arrow[ur, dashed, "h"] & Y
\end{tikzcd}
\]
\end{proposition}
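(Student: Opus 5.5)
The plan is to build $Z$ as a closed subspace of a countable product of discrete index sets, following Morita's construction. The non-Archimedean metric will come from the "first disagreement" distance, and lifts will be produced from clopen partitions of $T$.

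\textbf{Construction of $Z$.} Fix a complete metric $\sigma$ on $Y$. For each $n\geq1$ choose a locally finite open cover $\mathcal{U}_n$ of $Y$ with $\sigma$-mesh less than $2^{-n}$; this is possible by paracompactness. Since $R$ is characteristic, choose $r_1>r_2>\cdots$ in $R\setminus\{0\}$ with $r_k\to0$. Let $Z$ be the set of sequences $(U_n)_n\in\prod_n\mathcal{U}_n$ such that
\[
 \yoclosure(U_1)\cap\dots\cap\yoclosure(U_n)\neq\emptyset
 \quad\text{for every }n .
\]
For distinct sequences $(U_n)$ and $(V_n)$, put $r((U_n),(V_n))=r_k$, where $k$ is the first index with $U_k\neq V_k$. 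This $r$ is a bounded $R$-valued ultrametric and induces the product topology of the discrete sets $\mathcal{U}_n$. The defining condition is a closed condition, since it depends on finitely many coordinates at a time. Hence $Z$ is closed in the product, which is complete, so $(Z,r)$ is complete. For $\zeta=(U_n)\in Z$ the sets $F_n=\bigcap_{m\leq n}\yoclosure(U_m)$ form a decreasing sequence of nonempty closed sets with diameters tending to $0$. By completeness of $\sigma$ and Cantor's theorem, $\bigcap_n F_n$ is a single point, and we define $p(\zeta)$ to be that point.

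\textbf{Properties of $p$.} For continuity: if two sequences agree up to index $k$, then both images lie in $\yoclosure(U_k)$, so they are within $\sigma$-distance $2^{-k}$. For surjectivity: given $y\in Y$, choose $U_n\ni y$ for each $n$. For perfectness, let $K\subseteq Y$ be compact. By local finiteness, $K$ meets only finitely many closures of members of $\mathcal{U}_n$. Hence $p^{-1}(K)$ is a closed subset of a product of finite sets, and so it is compact. A continuous map into a metrizable space with compact preimages of compact sets is closed, so $p$ is perfect.

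\textbf{Lifting.} Let $T$ be ultrametrizable and $f\colon T\to Y$ continuous. I will use the fact that every open cover of an ultrametrizable space has a refinement that is a partition into open (hence clopen) sets. To see this, fix a compatible ultrametric on $T$ and take, for each $t$, the maximal balls of radius at most $1$ contained in some member of the cover. Two ultrametric balls are either disjoint or nested, and maximality plus the radius bound make this family a partition.

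Apply this to $\{f^{-1}(U)\mid U\in\mathcal{U}_n\}$ to get a clopen partition $\mathcal{P}_n$, and for each cell $P\in\mathcal{P}_n$ fix some $U_n(P)\in\mathcal{U}_n$ with $P\subseteq f^{-1}(U_n(P))$. Define $h(t)_n=U_n(P)$, where $P$ is the cell of $\mathcal{P}_n$ containing $t$. Then $f(t)\in\bigcap_n U_n(P)$, so $h(t)\in Z$ and $p(h(t))=f(t)$. Each coordinate of $h$ is locally constant, so $h$ is continuous. No coherence between the levels $n$ is needed.

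If $f$ is an embedding, then $h$ is injective because $p\circ h=f$ is. Moreover, $h^{-1}=f^{-1}\circ\restr{p}{h(T)}$ on $h(T)$ is continuous, so $h$ is an embedding.

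The step I expect to need the most care is the clopen-partition refinement property of ultrametrizable spaces. It is the only point where the hypothesis on $T$ is used, and the maximal-ball argument must be checked to give a genuine partition when the cover is not uniform.
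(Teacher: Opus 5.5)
Your proposal is correct and follows the paper's proof essentially step for step. It uses the same Morita-type space of cover coordinates with the first-disagreement ultrametric, the same perfectness argument via finitely many closures meeting a compact set, and the same lift by locally constant coordinates from clopen partitions. Defining $Z$ through nonempty finite intersections plus Cantor's theorem is a mild streamlining that makes closedness of $Z$ immediate, and you also check that $p$ is closed, which the paper leaves implicit.

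One point needs repair: your justification of the clopen-refinement fact, which the paper simply quotes. With arbitrary radii a maximal admissible ball around $t$ need not exist, because an increasing chain of balls, each inside some member of the cover, can have a union that lies inside no single member. Restrict the radii to $\{2^{-k}\mid k\geq 0\}$ and let $k(t)$ be the least $k$ with $B(t,2^{-k})$ inside a member. The disjoint-or-nested property of ultrametric balls then shows that these balls form a partition.
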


\begin{proof}
The empty case is immediate.  Suppose that $Y$ is nonempty, and fix a complete
compatible metric $d$ on $Y$.  Choose positive numbers $\epsilon_n\to0$.
Since metrizable spaces are paracompact, for each
 $n\in\Z_{\ge 0}$
 there
 are
 an  index set $I_n$
 and  a locally
finite open cover
\[
 \mathcal U_n=\{U_n(i)\mid i\in I_n\}
\]
such that for all $i \in I_n$ we have
\begin{equation}\label{eq:cover-mesh}
 \yodiameter_d\bigl(\yoclosure_Y U_n(i)\bigr)\leq\epsilon_n.
\end{equation}
For example, take a locally finite open refinement of the cover by
open
$d$-balls of radius $\epsilon_n/3$.

Choose a strictly decreasing sequence
$\{s_n\}_{n\in\Z_{\ge 0}}$
in
$R\setminus\{0\}$
converging
to $0$.
Consider each
$I_n$ as a discrete space and
put  $P=\prod_{n\in\Z_{\ge 0}}I_n$.
Define
an
ultrametric
$r$
 on
$P$
by
\[
 r(\alpha,\beta)=
 \begin{cases}
  0 &\alpha=\beta,\\
  s_k  &k=\min\{n\mid\alpha_n\neq\beta_n\}.
 \end{cases}
\]
This is a bounded complete $R$-valued ultrametric inducing the product
topology.
We also define
\[
 Z=\left\{\alpha\in P\, \middle|\,
   \bigcap_{n\in\Z_{\ge 0}}\yoclosure_Y U_n(\alpha_n)\neq\emptyset
   \right\}.
\]
For every $\alpha\in Z$, the intersection in the definition of $Z$ contains
exactly one point by \eqref{eq:cover-mesh}.
For $\alpha\in Z$,
denote this point by
$p(\alpha)$.
Thus, we obtain a map
$p\colon Z\longrightarrow Y$; $\alpha\longmapsto p(\alpha)$.

The subspace
$Z$ is closed in
$P$.
To see this, let
$\alpha^j\to\alpha$
with
$\alpha^j\in Z$,
and put
$y_j=p(\alpha^j)$.
For each fixed
$n$,
the
$n$-th coordinates of
$\alpha^j$
eventually equal $\alpha_n$.
A tail subsequence  of
$\{y_j\}_{j\in\Z_{\ge 0}}$
with sufficiently large number
therefore lies in
one closed set of $d$-diameter at most
 $\epsilon_n$.
 The sequence $\{y_j\}_{j\in\Z_{\ge 0}}$ is
$d$-Cauchy,
so it converges to some $y\in Y$.
For every $n$, closedness gives
$y\in\yoclosure_Y U_n(\alpha_n)$,
and hence $\alpha\in Z$.
This means that
$Z$ is closed in
$P$.
As a result,
the space $(Z,r)$ is
complete.

We next show that
the map
$p$
 is continuous.
 Fixing one sufficiently fine coordinate forces
the two image points to lie in the same set
in \eqref{eq:cover-mesh}.

The map $p$
is also  surjective because,
for $y\in Y$, one may choose
$U_n(\alpha_n)\ni y$
 for every $n$.

We next prove that
 $p$ is perfect.
If
$K\subseteq Y$ is compact, local
finiteness implies that,
 for each $n$,
 only finitely many members of
$\{\yoclosure_Y U_n(i)\mid i\in I_n\}$ meet $K$.  Let $I_n(K)$ be the finite
set of their indices.  Then
\[
 p^{-1}(K)\subseteq\prod_{n\in\Z_{\ge 0}}I_n(K).
\]
The left side is closed in
$P$,
and the product on the right is compact.
Therefore $p^{-1}(K)$ is compact.

It remains to construct the lift.
We use the fact that every open cover
$\mathcal V$
 of an ultrametrizable space
 $T$
 has a clopen partition
subordinate to
$\mathcal V$.
For every $n$,
apply this fact to the pulled-back cover
$\{f^{-1}(U_n(i))\mid i\in I_n\}$.
Assign to each partition cell an index of a
member containing it.
This gives a locally constant map
$a_n\colon T\to I_n$.  Define
\[
 h(x)=\{a_n(x)\}_{n\in\Z_{\ge 0}}.
\]
Then $h$ is continuous.
Since $f(x)\in U_n(a_n(x))$ for all $n$, we have
$h(x)\in Z$ and $p(h(x))=f(x)$.
 If $f$ is an embedding, then $h$ is
injective and its inverse on $h(T)$ is
$f^{-1}\circ\restr{p}{h(T)}$.
  Hence $h$ is an embedding.
\end{proof}

\begin{theorem}\label{thm:completion-selection}
Let $R$ be a characteristic range set and let $X$ be ultrametrizable.  The following are
equivalent:
\begin{enumerate}
  \item\label{item:c-s:1} $X$ is $\sigma$-locally compact;
  \item\label{item:c-s:2} for every $u\in\bult{X}{R}$, the space
  $X$ is an
        $F_\sigma$-subset of the completion of $(X,u)$.
\end{enumerate}
In particular, if $X$ is not $\sigma$-locally compact, there is
$u\in\bult{X}{R}$ whose completion does not contain $X$ as an
$F_\sigma$-subset.
\end{theorem}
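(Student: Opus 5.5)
The plan is to prove the two implications separately. The implication (1)$\Rightarrow$(2) is the easy one and uses only local compactness. The implication (2)$\Rightarrow$(1) goes by contraposition: we combine Stone's theorem with \Cref{lem:complete-ambient}(3) and \Cref{prop:perfect-resolution}, which carries a non-$F_\sigma$ witness into an ultrametric completion.

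\emph{Step 1: (1)$\Rightarrow$(2).}
Write $X=\bigcup_m K_m$ with each $K_m$ locally compact. Fix $u\in\bult{X}{R}$ and let $(\widetilde X,\widetilde u)$ be the completion.
\begin{itemize}
\item[(a)] Each $K_m$ is locally compact and dense in $\yoclosure_{\widetilde X}K_m$. A locally compact dense subspace of a Hausdorff space is open in it. Hence $K_m=\yoclosure_{\widetilde X}K_m\cap V_m$ for some open $V_m\subseteq\widetilde X$.
\item[(b)] So $K_m$ is locally closed in the metrizable space $\widetilde X$. An open set in a metric space is $F_\sigma$, and so is the intersection of a closed set with an $F_\sigma$ set. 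Thus $K_m$ is $F_\sigma$ in $\widetilde X$.
\item[(c)] A countable union of $F_\sigma$ sets is $F_\sigma$, so $X$ is $F_\sigma$ in $\widetilde X$.
\end{itemize}
This also gives the ``in particular'' clause once the converse is established.

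\emph{Step 2: (2)$\Rightarrow$(1) by contraposition.}
Assume $X$ is not $\sigma$-locally compact. By Stone's theorem $X\notin\aclass_1$. By \Cref{lem:complete-ambient}(3) there is a completely metrizable $Y$ with $X\subseteq Y$ (via an embedding $f$) and $X\notin\aclass_1(Y)$.
\begin{itemize}
\item[(a)] Apply \Cref{prop:perfect-resolution} to $Y$. This gives a bounded complete $R$-valued ultrametric space $(Z,r)$ and a perfect surjection $p\colon Z\to Y$. Since $X$ is ultrametrizable, $f$ lifts to an embedding $h\colon X\to Z$ with $p\circ h=f$.
\item[(b)] Put $u=h^*r$, i.e.\ $u(x,y)=r(h(x),h(y))$. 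This is a bounded $R$-valued ultrametric, and it is compatible because $h$ is an embedding. So $u\in\bult{X}{R}$.
\item[(c)] The completion of $(X,u)$ is isometric to $\yoclosure_Z h(X)$, since $Z$ is complete. If $X$ were $F_\sigma$ there, then $h(X)$ would be $F_\sigma$ in $Z$, because closed subsets of a closed subspace are closed in $Z$.
\end{itemize}

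\emph{Step 3: transfer back to $Y$.} This is the step I expect to be the main obstacle.
\begin{itemize}
\item[(a)] Suppose $h(X)=\bigcup_k F_k$ with each $F_k$ closed in $Z$.
\item[(b)] Since $p$ is perfect, it is a closed map, so each $p(F_k)$ is closed in $Y$.
\item[(c)] We need $\bigcup_k p(F_k)=f(X)$, which holds because $p(h(X))=f(X)$.
\item[(d)] Hence $f(X)$ is $F_\sigma$ in $Y$, contradicting the choice of $Y$.
\end{itemize}
The point needing care is that $p$ is closed on all of $Z$ and not merely on $h(X)$. This is exactly what perfectness (closed with compact fibers) provides. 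Note that we need no saturation of $h(X)$ under $p$: images of closed sets suffice.

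The remaining subtlety is in Step 2(c): confirming that the abstract completion of $(X,u)$ and $\yoclosure_Z h(X)$ are interchangeable for the $F_\sigma$ question. This is standard uniqueness of completions, since the isometry fixes $X$. This yields a $u$ as in the ``in particular'' statement and completes the equivalence.
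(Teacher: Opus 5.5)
Your proposal is correct and follows the paper's proof. For (2)$\Rightarrow$(1) you take Koshino's completely metrizable ambient space in which $X$ is not $F_\sigma$, lift the embedding through \Cref{prop:perfect-resolution}, pull back $r$, and push any $F_\sigma$ decomposition down along the closed map $p$, exactly as the paper does; the only difference is in (1)$\Rightarrow$(2), where the paper cites the easy half of Stone's theorem and you prove it directly by showing each locally compact piece is locally closed, hence $F_\sigma$, in the completion.
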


\begin{proof}
  Proof of $\eqref{item:c-s:1}\Longrightarrow \eqref{item:c-s:2}$:
By Stone's theorem, a metrizable space is $\sigma$-locally compact exactly
when it is an absolute $F_\sigma$-space \cite{stone-absolute}.  This proves
$\eqref{item:c-s:1}\Longrightarrow \eqref{item:c-s:2}$.

Proof of $\eqref{item:c-s:2}\Longrightarrow \eqref{item:c-s:1}$:
Assume  that $X$ is not
 $\sigma$-locally compact.
 There is a
completely metrizable space $Y$ containing $X$ such that $X$ is not
$F_\sigma$ in $Y$ \cite[Lemma~1.6(iii)]{koshino-borel}.  Apply
\Cref{prop:perfect-resolution} and lift the inclusion $\iota\colon X\hookrightarrow Y$ to
an embedding
 $h\colon X\to Z$ satisfying
 $p\circ h=\iota$.

The image $h(X)$ is not $F_\sigma$ in $Z$.
To see this,
suppose contrary that
we can write
$h(X)=\bigcup_kF_k$ with every $F_k$ closed in $Z$.  Since $p$ is closed,
each $p(F_k)$ is closed in $Y$, and
\[
 X=p(h(X))=\bigcup_kp(F_k),
\]
which
contradicts  the choice of $Y$.

Let $W=\yoclosure_Z h(X)$.  It is a closed complete $R$-ultrametric subspace
of $Z$.  The set $h(X)$ is not $F_\sigma$ in $W$, since $W$ is closed in
$Z$.  Pulling back $\restr{r}{h(X)^2}$ along $h$ gives a bounded compatible
$R$-valued ultrametric $u$ on $X$, and $W$ is its completion.  This is the
required metric.
\end{proof}

\section{Absolute Borel reversals}\label{sec:borel}

We now combine the results of Sections 3 and 4.  Since balls with fixed radius
in an ultrametric space form a disjoint clopen partition, the localization step
in Koshino's argument becomes especially simple.

\begin{theorem}\label{thm:multiplicative-reversal}
Let $R$ be a characteristic range set, let $X$ be ultrametrizable, and let
$n\geq1$.  If
\[
 (\bult{X}{R},\umetdis_X^R)\in\mclass_n,
\]
then $X\in\aclass_n$.
\end{theorem}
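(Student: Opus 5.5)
We argue by contraposition, following Koshino's completion-boundary scheme. Suppose $X\notin\aclass_n$. We will exhibit a closed subspace of $(\bult{X}{R},\umetdis_X^R)$ that is isometric to a space not in $\mclass_n$. Then part (4) of \Cref{lem:complete-ambient} shows $\bult{X}{R}\notin\mclass_n$.

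\textbf{Choice of completion.} We fix $u\in\bult{X}{R}$ and let $(Y,\rho)$ be the completion of $(X,u)$; this is a complete bounded $R$-valued ultrametric space containing $X$. The requirement is that $X\notin\aclass_n(Y)$.
\begin{itemize}
\item For $n\geq2$, membership in $\aclass_n$ can be tested in a completely metrizable ambient space (part (2) of \Cref{lem:complete-ambient}). Such a space can be replaced by an ultrametric completion: pass through \Cref{prop:perfect-resolution} exactly as in the proof of \Cref{thm:completion-selection}. Perfect maps preserve additive classes under images of the pieces, so, pulling back a representation $h(X)=\bigcup_k F_k$ along $p$, one checks that $h(X)\in\aclass_n(Z)$ would give $X\in\aclass_n(Y')$ in the original ambient space.
\item For $n=1$, \Cref{thm:completion-selection} provides $u$ directly.
\end{itemize}

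\textbf{Localization.} The complement $Y\setminus X$ is not in $\mclass_n(Y)$, since $\mclass_n(Y)$ consists exactly of complements of sets in $\aclass_n(Y)$. We want a point $a$ and a set $Z$ as required in \Cref{lem:boundary-embedding}. Fix $s\in R\setminus\{0\}$. The $\rho$-balls of radius $s$ form a clopen partition of $Y$, so $Y\setminus X$ is the disjoint union of its traces on these balls. Membership in $\mclass_n(Y)$ is local over such a clopen partition (cf. part (1) of \Cref{lem:complete-ambient}), so we can pass into smaller and smaller balls where the failure persists. We need two disjoint clopen balls $D_1,D_2$ with the following properties:
\begin{itemize}
\item $D_1$ contains a point $a\in\yoclosure_Y(X)\setminus X$. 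Such a point exists unless $X$ is complete; if $X$ were complete, then $X\in\aclass_n$ would hold trivially.
\item $Z:=D_2\setminus X$ is not in $\mclass_n$, viewed in $D_2$.
\end{itemize}
Set $A=X\cap D_1$ and $B=X\cap D_2$. These are clopen in $X$, so $A\cup B$ is closed in $X$. Since $D_2$ is clopen and $X$ is dense in $Y$, we get $\yoclosure_Y(Z)\subseteq D_2=Z\cup B$. \textbf{The main obstacle} is ensuring that the complement stays non-$\mclass_n$ inside a ball disjoint from one containing a remainder point. I would handle it as follows. If the remainder is non-$\mclass_n$ on some ball, then either it fails on at least two radius-$s$ subballs, or there is a single subball carrying the failure together with other remainder points nearby; if neither happens at any scale, then the remainder is locally a point and therefore closed. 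Iterating over radii $s_k\to0$ then produces the required pair of balls.

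\textbf{Conclusion.} By \Cref{lem:boundary-embedding}, $i(Z)$ is a closed isometric copy of $(Z,\rho)$ in $\bult{A\cup B}{R}$. Since $A\cup B$ is closed in $X$, \Cref{prop:closed-extension-image} gives a closed isometric embedding $E\circ i$ of $Z$ into $\bult{X}{R}$. If $\bult{X}{R}$ were in $\mclass_n$, then $Z$ would be in $\mclass_n$ by part (4) of \Cref{lem:complete-ambient}. In particular $Z$ would be in $\mclass_n(D_2)$, which contradicts the choice of $D_2$. Hence $X\in\aclass_n$.
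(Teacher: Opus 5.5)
Your proof is correct. It has the same architecture as the paper's: pass to an ultrametric completion, localize on a clopen ball partition, apply \Cref{lem:boundary-embedding}, then \Cref{prop:closed-extension-image}. The difference is in what you localize.

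\textbf{Comparison with the paper.} The paper, following Koshino, replaces $X$ by $B=X\setminus\yointerior_Y X$. It then shows $Z=\yoclosure_Y(B)\setminus X\notin\mclass_n(Y)$, which uses closure of $\aclass_n(Y)$ under union with open sets and under intersection with closed sets. Finally it takes $A$ to be a sequence in $B\cap\Up$ converging to $a$.

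You instead localize the whole remainder $Y\setminus X$ and take $A=X\cap D_1$, $B=X\cap D_2$. Since $D_1,D_2$ are clopen and $X$ is dense in $Y$, the hypotheses of \Cref{lem:boundary-embedding} hold automatically: $a\in\yoclosure_Y(A)\setminus X$, $Z\subseteq\yoclosure_Y(B)\setminus X$, and $\yoclosure_Y(Z)\subseteq D_2=Z\cup B$. So in the ultrametric setting the $O,B,C$ bookkeeping inherited from the ordinary-metric argument is superfluous, and your route is shorter.

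Your ``main obstacle'' is settled in one line by the paper's device. Since $Y\setminus X\notin\mclass_n(Y)$, it has at least two points, so choose $0<t<\yodiameter_\rho(Y\setminus X)$. The remainder then meets at least two radius-$t$ balls. By the locally finite union property one of them carries the failure and serves as $D_2$, and any other one serves as $D_1$. Your iteration over $s_k\to0$ amounts to exactly this.

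\textbf{Minor points.}
\begin{itemize}
\item For $n\geq2$ the detour through \Cref{prop:perfect-resolution} is unnecessary. The completion $Y$ is itself completely metrizable, so by \Cref{lem:complete-ambient}(2), $X\in\aclass_n(Y)$ would already give $X\in\aclass_n$.
\item The justification you give for that detour is not right as stated: perfect images of $G_\delta$-sets need not even be Borel. What actually makes your transfer work is that $p$ restricted to $h(X)$ is an embedding, combined with absoluteness of the lower classes.
\item Completeness of $X$ does not give $X\in\aclass_1$; the Baire space is complete but not $\sigma$-locally compact. The existence of a remainder point $a$ follows instead from the fact that $Y=X$ would contradict $X\notin\aclass_n(Y)$. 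More simply, $Y\setminus X\notin\mclass_n(Y)$ already forces it to have at least two points.
\end{itemize}
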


\begin{proof}
For the
sake of contradiciton,
 suppose that $X\notin\aclass_n$.

 If $n=1$,
\Cref{thm:completion-selection}
gives a bounded compatible
$R$-valued
ultrametric whose completion
$(Y,\rho)$ satisfies
$X\notin\aclass_1(Y)$.
If $n\geq2$,
by the standard fact recalled in Section~2, choose any bounded compatible
$R$-valued ultrametric on
$X$ and let $(Y,\rho)$
be its completion.
By the statement (2) in \Cref{lem:complete-ambient},
we have $X\notin\aclass_n(Y)$.
In either case, we have obtained a bounded compatible $R$-valued
ultrametric on $X$ whose completion $(Y,\rho)$ satisfies
$X\notin\aclass_n(Y)$.

Put
\[
 O=\yointerior_Y X,
 \qquad B=X\setminus O,
 \qquad C=\yoclosure_Y B,
 \qquad Z=C\setminus X.
\]
Since $B$ is closed in $X$, we have $C\cap X=B$.  The additive class
$\aclass_n(Y)$ is closed under union with open sets, so
$B\notin\aclass_n(Y)$.  If $Z\in\mclass_n(Y)$, then
\[
 B=C\setminus Z=C\cap(Y\setminus Z)
\]
would belong to $\aclass_n(Y)$, because additive Borel classes are closed
under intersection with closed sets.  Hence $Z\notin\mclass_n(Y)$ and, in
particular, $Z\notin\mclass_n$.

The space $Z$ is nondegenerate: if it had at most one point, it would belong
to every absolute Borel class, contrary to $Z\notin\mclass_n$.  Choose
$0<t<\yodiameter_\rho(Z)$.  The open $\rho$-balls of radius $t$ form a
locally finite clopen partition of $Y$.  The locally finite union property
enables us to take  a member $U$ such that
\[
 \Zp=Z\cap U\notin\mclass_n.
\]
At least one other member $\Up$ meets $Z$.  Choose
$a\in Z\cap\Up$ and a sequence
$A=\{a_k\mid k\in\Z_{\ge 0}\}\subseteq B\cap\Up$
 of distinct points converging to $a$.
Put
$\Bp=B\cap U$.
In this situation,
$\Bp=B\cap U$ is closed because $B$ is
closed in $X$ and $U$ is clopen.
We also observe that
$A$ is closed because its only
accumulation point in $Y$ is $a\notin X$.
As a result,
 $A$ and $\Bp$ are closed in $X$.
Note that
$A$ and $\Bp$ are disjoint
  and
  they satisfy
\[
 \Zp\subseteq\yoclosure_Y(\Bp)\setminus X,
 \qquad
 \yoclosure_Y(\Zp)\subseteq \Zp\cup \Bp.
\]

By \Cref{lem:boundary-embedding},
there is a closed isometric embedding
\[
 (\Zp,\rho)\longrightarrow
 (\bult{A\cup \Bp}{R},\umetdis_{A\cup \Bp}^R).
\]
The subspace $A\cup \Bp$ is closed in $X$.  By
\Cref{prop:closed-extension-image}, we obtain a closed embedding
\[
 H\colon \Zp\longrightarrow (\bult{X}{R},\umetdis_X^R).
\]
If the latter space belonged to $\mclass_n$, its closed subspace $H(\Zp)$ would
belong to $\mclass_n$, a contradiction.
\end{proof}

The other half does not require the completion-selection theorem: the
complete-ambient criterion covers $\mclass_n$ for every $n\geq1$.

\begin{theorem}\label{thm:additive-reversal}
Let $R$ be a characteristic range set, let $X$ be ultrametrizable, and let
$n\geq1$.  If
\[
 (\bult{X}{R},\umetdis_X^R)\in\aclass_n,
\]
then $X\in\mclass_n$.
\end{theorem}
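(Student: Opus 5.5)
Argue by contradiction, mirroring the proof of \Cref{thm:multiplicative-reversal} with the roles of the classes interchanged. Suppose $X\notin\mclass_n$. By the standard fact recalled in Section~2, fix any bounded compatible $R$-valued ultrametric on $X$ and let $(Y,\rho)$ be its completion. This is a complete bounded $R$-valued ultrametric space. By statement (2) of \Cref{lem:complete-ambient}, which covers $\mclass_n$ for every $n\geq1$, we get $X\notin\mclass_n(Y)$. This is why no analogue of \Cref{thm:completion-selection} is needed here, including at $n=1$.

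Next, set
\[
O=\yointerior_Y X,\qquad B=X\setminus O,\qquad C=\yoclosure_Y B,\qquad Z=C\setminus X,
\]
so that again $C\cap X=B$ and $\yoclosure_Y(Z)\subseteq Z\cup B$. Multiplicative classes are closed under intersection with closed sets, and for $n\geq1$ they are also closed under union with the open set $O$. Indeed, $O$ is $F_\sigma$, hence in $\mclass_n(Y)$ when $n\geq 2$. For $n=1$ we argue directly: if $B$ is $G_\delta$ in $Y$, then $X=O\cup B$ is $G_\delta$, because $O\cup B=O\cup(C\cap G)$ with $G$ a $G_\delta$ set, and $O\cup G_k$ is open. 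Hence $B\notin\mclass_n(Y)$. If $Z$ belonged to $\aclass_n(Y)$, then $B=C\cap(Y\setminus Z)$ would lie in $\mclass_n(Y)$. So $Z\notin\aclass_n(Y)$, and therefore $Z\notin\aclass_n$ as an absolute class. In particular $Z$ has at least two points, since singletons and the empty set lie in every class.

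Now localize exactly as before. Choose $0<t<\yodiameter_\rho(Z)$. The open $t$-balls form a locally finite clopen partition of $Y$, so by \Cref{lem:complete-ambient}(1) some member $U$ satisfies $\Zp=Z\cap U\notin\aclass_n$. Another member $\Up$ meets $Z$ at some point $a$. Since $a\in\yoclosure_Y B$, choose distinct $a_k\in B\cap\Up$ with $a_k\to a$, and put $A=\{a_k\}$ and $\Bp=B\cap U$. The sets $A$ and $\Bp$ are disjoint and closed in $X$, and
\[
\Zp\subseteq\yoclosure_Y(\Bp)\setminus X,\qquad \yoclosure_Y(\Zp)\subseteq\Zp\cup\Bp,
\]
using that $U$ is clopen. \Cref{lem:boundary-embedding} then gives a closed isometric embedding of $(\Zp,\rho)$ into $\bult{A\cup\Bp}{R}$. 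Since $A\cup\Bp$ is closed in $X$, \Cref{prop:closed-extension-image} turns this into a closed embedding $H\colon\Zp\to(\bult{X}{R},\umetdis_X^R)$. If the latter space were in $\aclass_n$, then by \Cref{lem:complete-ambient}(4) the closed subspace $H(\Zp)\cong\Zp$ would be in $\aclass_n$, a contradiction.

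The main point to check is the class bookkeeping in the second paragraph, namely that $B\notin\mclass_n(Y)$ follows from $X\notin\mclass_n(Y)$. This is the one place where the argument differs from the multiplicative case, and it is handled by the case split on $n=1$ versus $n\geq2$ given there. Everything else transfers verbatim, because the geometric ingredients are symmetric in the two classes.
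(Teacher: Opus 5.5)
Your proposal is correct and follows the paper's proof essentially step for step. You use the completion as the complete ambient space via \Cref{lem:complete-ambient}(2), the same decomposition $O,B,C,Z$, the same clopen-ball localization, and the same combination of \Cref{lem:boundary-embedding} with \Cref{prop:closed-extension-image}. The only deviation is your case split in proving $B\notin\mclass_n(Y)$. The paper argues uniformly that union with an open set preserves the additive classes below level $n$ and commutes with countable intersections, which is exactly your $n=1$ argument, so the split is correct but unnecessary; indeed $O$, being open, already lies in $\mclass_n(Y)$ for every $n\geq1$.
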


\begin{proof}
For the sake of contradiciton,
suppose that $X\notin\mclass_n$.
 By the standard fact recalled in Section~2,
choose a bounded compatible $R$-valued
ultrametric on $X$ and let $(Y,\rho)$ be its completion.  By the statement
(2) in \Cref{lem:complete-ambient},
\[
 X\notin\mclass_n(Y).
\]
Define $O,B,C$, and $Z$ as in the proof of
\Cref{thm:multiplicative-reversal}.
The class $\mclass_n(Y)$ is closed under union with an open set, since union with an open set preserves the additive classes below level $n$ and distributes over countable intersections.
Hence $B\notin\mclass_n(Y)$.  If
$Z\in\aclass_n(Y)$, then
\[
 B=C\setminus Z=C\cap(Y\setminus Z)
\]
would belong to $\mclass_n(Y)$, since multiplicative Borel classes are closed
under intersection with closed sets.  Thus $Z\notin\aclass_n$.

Choose a partition consisting of clopen balls with fixed radius as in
\Cref{thm:multiplicative-reversal}.
 Local finiteness enables us to find
a member $U$ such that
\[
\Zp=Z\cap U\notin\aclass_n,
\]
and nondegeneracy gives a
distinct member $\Up$ meeting $Z$.  Choose $a$, $A$, and $\Bp$ exactly as in
the preceding proof.  The boundary embedding and
\Cref{prop:closed-extension-image} give a closed embedding
\[
 H\colon \Zp\longrightarrow(\bult{X}{R},\umetdis_X^R).
\]
By the hypothesis,
we conclude that
the  closed subspace  $H(\Zp)$
 belongs to $\aclass_n$, which is impossible.
\end{proof}

\section{Complete metrizability}\label{sec:complete}

Let $\bpult{X}{R}$ denote the space of bounded continuous
pseudo-ultrametrics on $X$ with values in $R$.  We equip this space with
$\umetdis_X^R$.

We can now characterize complete metrizability.

\begin{theorem}\label{thm:complete-metrizability}
Let $R$ be a characteristic range set and let $X$ be an
ultrametrizable space.  Then $X$ is $\sigma$-compact if and only if
$(\bult{X}{R},\umetdis_X^R)$ is completely metrizable.
\end{theorem}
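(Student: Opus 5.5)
We prove the two implications separately; the hard direction is the forward one.

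\emph{Necessity.} Suppose $(\bult{X}{R},\umetdis_X^R)$ is completely metrizable. Completely metrizable spaces belong to $\mclass_2$, i.e.\ they are absolute $G_\delta$. This is weaker than membership in $\mclass_1$, so \Cref{thm:multiplicative-reversal} does not apply at level $1$. We instead use its proof with $n=1$ replaced by the $G_\delta$ level, via the $\aclass_1$ witness from \Cref{thm:completion-selection}.

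\begin{itemize}
  \item If $X$ is not $\sigma$-locally compact, \Cref{thm:completion-selection} gives a completion $(Y,\rho)$ in which $X$ is not $F_\sigma$.
  \item The construction $O,B,C,Z$ of \Cref{thm:multiplicative-reversal} then yields a remainder piece $\Zp\subseteq Y\setminus X$ that is not $G_\delta$ in $Y$. Hence $\Zp$ is not completely metrizable.
  \item $\Zp$ embeds as a closed subspace of $\bult{X}{R}$ by \Cref{lem:boundary-embedding} and \Cref{prop:closed-extension-image}. This is a contradiction, so $X$ is $\sigma$-locally compact.
\end{itemize}

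Now write $X=\bigcup_k X_k$ with each $X_k$ locally compact. We still need $X$ to be $\sigma$-compact. Suppose not. A $\sigma$-locally compact metrizable space that is not $\sigma$-compact is not Lindelöf, since a Lindelöf locally compact space is $\sigma$-compact. Then $X$ contains an uncountable closed discrete set, and taking a clopen partition we can write $X=\bigsqcup_{i\in I}X_i$ with $I$ uncountable and each $X_i$ nonempty clopen.

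The step I expect to be the main obstacle is producing a closed copy of a non-completely-metrizable space, such as $\mathbb{Q}$, inside $\bult{X}{R}$. The plan is as follows.

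\begin{itemize}
  \item Fix $u\in\bult{X}{R}$ whose restriction to pairs in distinct cells is $\ge s$.
  \item Vary the inter-cell distances below the threshold along a countable subfamily of cells. Infinitely many cells are needed, whereas $\mathbb{Q}$ only needs countably many, so one possible route is to encode rationals as eventually-zero sequences of perturbations.
  \item Verify closedness using \Cref{lem:threshold-agreement}, as in \Cref{lem:boundary-embedding}.
\end{itemize}

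If this fails, I would first try to show directly that $\bult{X}{R}$ is not Čech-complete. Take a decreasing sequence of $\umetdis$-dense open sets whose intersection would have to contain a metric with infinitely many cells at distance tending to $0$. Such a metric fails to be compatible, since uncountably many clopen cells with distances tending to $0$ produce non-isolated accumulation.

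\emph{Sufficiency.} Let $X$ be $\sigma$-compact and write $X=\bigcup_k K_k$ with $K_k$ compact and increasing. The ambient space $\bpult{X}{R}$ is complete under $\umetdis_X^R$: a Cauchy sequence eventually stabilises above every positive threshold by \Cref{lem:threshold-agreement}, so it has a pointwise limit that is again a continuous pseudo-ultrametric. We show that $\bult{X}{R}$ is a $G_\delta$ in $\bpult{X}{R}$; complete metrizability then follows by Alexandrov's theorem.

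A bounded continuous pseudo-ultrametric $d$ is a compatible metric if and only if two conditions hold.
\begin{itemize}
  \item \emph{Separation.} For each $k,m$ and each $\epsilon\in R\setminus\{0\}$, there is $\delta>0$ such that $d(x,y)<\delta$ with $x\in K_k$ implies $u_0(x,y)<1/m$, where $u_0$ is a fixed compatible metric.
  \item \emph{Continuity.} This is automatic for members of $\bpult{X}{R}$.
\end{itemize}
Using compactness of $K_k$, the separation condition for fixed $k,m$ becomes the statement that
\[
\inf\{d(x,y)\mid x\in K_k,\ u_0(x,y)\ge 1/m\}>0 .
\]
This is a positive minimum attained over a compact-like set. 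I would argue it is an open condition in $\umetdis$ intersected over $\delta\in R$, giving a countable intersection of open sets. The delicate point is that $y$ ranges over a non-compact set. I would handle this by noting that a $\umetdis$-perturbation of size less than $\delta$ leaves all values $\ge\delta$ unchanged, so the condition $\inf>\delta$ is $\umetdis$-open.
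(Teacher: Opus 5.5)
Your sufficiency half is the paper's argument in contrapositive form. Your sets $\{d\mid \inf\{d(x,y)\mid x\in K_k,\ u_0(x,y)\geq 1/m\}>0\}$ play the role of the paper's $\mathcal U(m,n)$; note that for fixed $k,m$ this is a union over $\delta$, not an intersection. Your sketch of completeness of $\bpult{X}{R}$ fills in a step the paper only asserts. In the necessity half you misread the indexing. Here $\mclass_1$ already consists of countable intersections of open sets, so completely metrizable means $\mclass_1$. Hence \Cref{thm:multiplicative-reversal} with $n=1$ directly gives that $X$ is $\sigma$-locally compact, and your first list only re-derives this case.

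\textbf{The gap.} The genuine gap is the step from $\sigma$-locally compact to $\sigma$-compact, i.e.\ ruling out non-separable $X$. There you offer only a plan, and both versions fail as stated.
\begin{itemize}
\item \emph{Countable perturbations.} Perturbing $u$ along a countable subfamily of cells cannot yield a closed copy of $\mathbb{Q}$. Take $X$ uncountable discrete with singleton cells and a countable $J\subseteq X$. The metrics agreeing with $u$ off $J^2$ form a closed subset of $\bult{X}{R}$, since evaluation at a pair is $\umetdis$-continuous by \Cref{lem:threshold-agreement}. Restriction maps this subset isometrically onto a closed subset of $\bult{J}{R}$. That space is completely metrizable by your own sufficiency half, because $J$ is $\sigma$-compact. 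So all its closed subsets are completely metrizable, and any witness must genuinely involve uncountably many cells.
\item \emph{The fallback.} It rests on a false claim: cells at mutual distances tending to $0$ do not destroy compatibility. For $s_k\downarrow 0$ in $R\setminus\{0\}$, the ultrametric $d(m,n)=s_{\min\{m,n\}}$ $(m\neq n)$ on discrete $\mathbb{N}$ is compatible. Failure of compatibility needs a point $x$ and points $y_k$ not converging to $x$ with $d(x,y_k)\to 0$. Nothing in your sketch forces such a point into the intersection of the dense open sets.
\end{itemize}

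\textbf{How the paper closes it.} The paper uses an external result. A completely metrizable $\bult{X}{R}$ is a $G_\delta$, hence Borel, subset of the complete space $\bpult{X}{R}$. By the theorem of Ishiki and Uda, this forces $X$ to be separable. A separable $\sigma$-locally compact metrizable space is $\sigma$-compact. Without that theorem, or an actual proof that non-separability of $X$ obstructs complete metrizability of $\bult{X}{R}$, your necessity direction is incomplete.
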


\begin{proof}
Assume  first that $X$ is $\sigma$-compact.
The following $G_\delta$ construction is adapted from
\cite[Proposition~3]{koshino-metrics-topology}.
Choose a strictly decreasing sequence $\{r_n\}_{n\in\Z_{\geq0}}$ in
$R\setminus\{0\}$ such that $r_n\to0$.  Since $X$ is $\sigma$-compact,
there are compact subsets $K_m\subseteq X$ such that
\[
  X=\bigcup_{m\in\Z_{\geq0}}K_m.
\]
Fix $u\in\bult{X}{R}$.  For $m,n\in\Z_{\geq0}$, define
\[
\mathcal U(m,n)=
\left\{d\in\bpult{X}{R}
\, \middle |\,
\begin{array}{l}
\exists\delta>0\ \forall x\in K_m\ \forall y\in X,\\
d(x,y)<\delta\Longrightarrow u(x,y)<r_n
\end{array}\right\}.
\]
We first show that each $\mathcal U(m,n)$ is open in
$(\bpult{X}{R},\umetdis_X^R)$.  Let $d\in\mathcal U(m,n)$ and choose
an associated $\delta>0$.  If
$\umetdis_X^R(d,e)<\delta/2$, then there is an
$\eta<\delta/2$ in $E_R(d,e)$.  Hence, whenever
$e(x,y)<\delta/2$, we have
\[
  d(x,y)\leq e(x,y)\vee\eta<\delta/2<\delta.
\]
Thus $u(x,y)<r_n$ for all $x\in K_m$ and $y\in X$, and
$e\in\mathcal U(m,n)$.

We next prove  that
\[
  \bult{X}{R}=\bigcap_{m,n\in\Z_{\geq0}}\mathcal U(m,n).
\]
Let $d\in\bult{X}{R}$.  Fix $m,n$.  For each $x\in K_m$, the
compatibility of $d$ and $u$ gives $a_x>0$ such that
\[
  d(x,y)<a_x\Longrightarrow u(x,y)<r_{n+1}
  \qquad(y\in X).
\]
The $d$-balls $B_d(x,a_x/2)$ cover the compact set $K_m$. Choose a
finite subcover with centers $x_1,\ldots,x_k$.  Put
\[
  \delta=\min_{1\leq i\leq k}a_{x_i}/2.
\]
If $x\in K_m$ and $d(x,y)<\delta$, choose $i$ with
$d(x,x_i)<a_{x_i}/2$.  Then
\[
  d(x_i,y)<a_{x_i}
  \quad\text{and}\quad
  u(x,x_i)<r_{n+1},\qquad u(x_i,y)<r_{n+1}.
\]
The ultrametric inequality for $u$ yields
$u(x,y)<r_{n+1}<r_n$.  Hence $d\in\mathcal U(m,n)$.

Conversely, suppose that $d\in\bigcap_{m,n\in\Z_{\geq0}}\mathcal U(m,n)$.  If
$d(x,y)=0$, choose $m$ with $x\in K_m$.  For every $n$, the defining
condition for $\mathcal U(m,n)$ gives $u(x,y)<r_n$, and therefore
$u(x,y)=0$.
Then, $x=y$.
Thus $d$ is an
ultrametric.  Moreover, for every $x\in K_m$ and every $n$, the same
condition gives a $d$-neighborhood of $x$ contained in the
$u$-ball of radius $r_n$.  Since $\{r_n\}$ tends to zero, the topology
of $u$ is contained in the topology of $d$.  The reverse inclusion
follows from the continuity of $d$ on $X^2$.  Hence $d$ is compatible
with the topology of $X$, so $d\in\bult{X}{R}$.

Consequently, $\bult{X}{R}$ is a $G_\delta$-subspace of
$\bpult{X}{R}$.  Since the latter space is completely metrizable, so
is $\bult{X}{R}$.

Conversely, suppose that $\bult{X}{R}$ is completely metrizable.  Since
$\bpult{X}{R}$ is completely metrizable as well, the standard characterization
of completely metrizable subspaces shows that $\bult{X}{R}$ is a
$G_\delta$-subset of $\bpult{X}{R}$, and hence is Borel there.
By Ishiki and Uda
\cite{ishiki-uda},
if $\bult{X}{R}$
is Borel in $\bpult{X}{R}$,
then $X$ is separable.
Using this result,
we conclude that
 $X$ is separable.
 By
\Cref{thm:multiplicative-reversal}, the space $X$ belongs to $\aclass_1$ and
is therefore $\sigma$-locally compact.
Write
$X=\bigcup_{n\geq1}L_n$,
where each $L_n$ is locally compact.  Every $L_n$ is a separable metrizable
space.  Since each $L_n$ is also locally compact, it is
$\sigma$-compact.  Therefore $X$ is a countable union of $\sigma$-compact
spaces, and hence is $\sigma$-compact.
\end{proof}

\printbibliography

\end{document}